\documentclass[journal]{IEEEtran}

\usepackage{amsmath,amssymb,amsthm}
\usepackage{mathtools}
\usepackage{cite}
\usepackage{algorithm}
\usepackage{algorithmic}
\usepackage{booktabs}
\usepackage{xcolor}
\usepackage{url}
\usepackage{enumitem}
\usepackage{graphicx}
\graphicspath{{figures/}}

\newtheorem{theorem}{Theorem}
\newtheorem{lemma}{Lemma}
\newtheorem{proposition}{Proposition}
\newtheorem{corollary}{Corollary}
\newtheorem{definition}{Definition}
\newtheorem{remark}{Remark}
\newtheorem{assumption}{Assumption}
\newtheorem{problem}{Problem}

\DeclareMathOperator{\col}{col}

\DeclareMathOperator{\inte}{int}
\DeclareMathOperator{\dist}{dist}

\newcommand{\R}{\mathbb{R}}

\newcommand{\cW}{\mathcal{W}}
\newcommand{\cS}{\mathcal{S}}
\newcommand{\cV}{\mathcal{V}}
\newcommand{\cX}{\mathcal{X}}
\newcommand{\cU}{\mathcal{U}}
\newcommand{\cZ}{\mathcal{Z}}
\newcommand{\cA}{\mathcal{A}}
\newcommand{\cN}{\mathcal{N}}
\newcommand{\cD}{\mathcal{D}}

\newcommand{\cG}{\mathcal{G}}

\newcommand{\cM}{\mathcal{M}}

\allowdisplaybreaks

\begin{document}

\title{Robust Receding Horizon Games with Additive Uncertainty}

\author{Dinesh Patra, Tanish~Jain
        and~Ashish~R.~Hota%
\thanks{D. Patra and A. R. Hota are with the Department of Electrical Engineering, Indian Institute of Technology (IIT) Kharagpur, India. T. Jain was with the Department of Aerospace Engineering, IIT Kharagpur, India. Email: dinesh.patra912@kgpian.iitkgp.ac.in, jtanish120204@gmail.com, ahota@ee.iitkgp.ac.in.}}

\maketitle

\begin{abstract}
We study a receding horizon game in which multiple agents drive linear systems subject to additive disturbances, private state and input constraints, and shared coupling constraints. We propose a robust game-theoretic control framework that combines tube-based constraint tightening with a finite-horizon generalized Nash equilibrium problem (GNEP), equipped with a discrete algebraic Riccati equation (DARE)-based terminal cost and a decoupled positively invariant terminal set. The framework guarantees recursive feasibility for every bounded disturbance realization. Exploiting the potential-game structure induced by tracking costs, we further establish asymptotic convergence of each agent's nominal state to a steady-state variational generalized Nash equilibrium (vGNE), and show that each agent's actual state converges to a neighborhood of the vGNE determined by the minimal robust positively invariant set.
\end{abstract}

\begin{IEEEkeywords}
Receding horizon games, robust model predictive control,
generalized Nash equilibrium, potential games.
\end{IEEEkeywords}

\section{Introduction}\label{sec:intro}

Modern engineered systems such as smart power grids, autonomous vehicles, and industrial robots increasingly rely on the interplay of competition and coordination among many agents operating in a shared uncertain environment. As the number of agents grows, relying on a single centralized controller becomes intractable. Instead, each agent needs to optimize its own objective while satisfying both private constraints and constraints it shares with others. For an individual agent, deploying a receding horizon or model predictive control (MPC) scheme is a natural choice because it explicitly handles constraints and provides rigorous guarantees on closed-loop behavior. Therefore, the study of game-theoretic interaction among agents that individually run receding horizon controllers is a problem of considerable practical relevance.

Consequently, the receding horizon games (RHG) framework has attracted growing interest in recent years. References \cite{wang2021,spica2020,liniger2019} explored game-theoretic planning for autonomous racing and self-driving cars, while Mignoni \emph{et al.}~\cite{mignoni2023} applied related ideas to energy management. The first rigorous treatments of recursive feasibility and stability in RHGs appeared in \cite{hall2022,hall2024}. The initial work~\cite{hall2022} established recursive feasibility and stability through a connection to potential games but relied on terminal equality constraints. The follow-up work~\cite{hall2024} proved stability under input coupling and input constraints for general economic costs, and did not consider constraints on the states or disturbance affecting the dynamics. Benenati and Grammatico~\cite{benenati2025} subsequently studied linear-quadratic dynamic games as receding-horizon variational inequalities.

The above works consider deterministic linear dynamical systems that are not affected by any disturbance or uncertainty. In the single-agent setting, robust MPC under additive disturbances have been studied extensively \cite{mayne2000constrained,borrelli2017predictive,rmpc_survey_bemporad2007robust}. Among various robust MPC techniques, tube-based approaches~\cite{mayne2005,kouvaritakis2015,rakovic2023} provide an elegant solution by confining the uncertain states to a tube around a nominal trajectory, and establish recursive feasibility and stability guarantees by tightening the constraints. 

Robust MPC for multiple agents has been studied largely in the \emph{cooperative}
distributed setting, where agents jointly minimize a system-wide objective rather
than their own costs. The tube-based treatment of coupled constraints under persistent disturbances in distributed MPC originated with \cite{richards2007,trodden2010}. Building on this foundation, \cite{kuwata2011} proposed a cooperative receding-horizon MILP scheme with constraint tightening, Conte \emph{et al.}~\cite{conte2013} developed tube-based
robust distributed MPC via distributed constraint tightening, and Stewart \emph{et
al.}~\cite{stewart2010} established cooperative distributed MPC that converges to the
centralized optimum. Trodden and Richards~\cite{trodden2013} promoted cooperation by
letting each agent design hypothetical plans for its neighbors while allowing only
one agent to optimize per step. A more recent work \cite{wang2022} combined tube-based tightening with online-adaptive local terminal sets, and \cite{wiltz2025} decoupled the coupled state constraints through consistency constraints that confine each agent to a communicated reference tube. Similarly, \cite{kohler2022} proposed to coordinate agents by communicated reference trajectories and a coupling cost. In all of these works, coupling is resolved by exchanging predicted or reference trajectories or dual variables, and feasibility relies either on a shared objective or on knowledge of neighbors' models. Furthermore, convergence is to a prescribed setpoint or a cooperative optimum, not to a game-theoretic equilibrium.

Despite this large body of work on robust MPC for single and cooperative multi-agent
systems, receding-horizon games (RHGs) under uncertainty have not been rigorously
investigated. The problem is challenging because, unlike in the cooperative setting,
self-interested agents in a game are unwilling to disclose their dynamics, costs, or
predicted trajectories. In addition, the target is not a prescribed setpoint but the
equilibrium of the coupled game itself, which must be reached without violating the uncertain constraints, involving the states and inputs, that are shared among multiple agents. In this paper, we take a step towards closing this gap. Our contributions are threefold.
\begin{enumerate}
  \item We formulate a RHG in which each agent has linear dynamics affected by
        bounded additive disturbance and must robustly satisfy private and shared
        constraints. We present a tube-based constraint-tightening
        scheme for the coupled constraints in which each agent broadcasts only a
        scalar tightening term and discloses neither its dynamics nor its cost to others,
        unlike the cooperative schemes of~\cite{trodden2013, wang2022, wiltz2025},
        where coupling is resolved by exchanging predicted or reference trajectories.

  \item We construct three terminal ingredients that together guarantee recursive
        feasibility for every bounded disturbance realization: a discrete algebraic
        Riccati equation (DARE)-based terminal cost, a decoupled positively invariant
        terminal set, and a resource-allocation scheme that splits the coupled
        terminal constraint into per-agent shares so that terminal invariance is
        certified. In contrast, the terminal sets in~\cite{wang2022, wiltz2025} are local and do not contend with a coupled terminal constraint.

  \item We provide a rigorous stability analysis based on the potential-game structure
        of the generalized Nash equilibrium problem (GNEP), where the game potential
        acts as a joint Lyapunov function. This establishes convergence of each
        agent's nominal state to a steady-state variational generalized Nash
        equilibrium (vGNE) and of the actual state to a minimal robust positively
        invariant (RPI) neighborhood of the vGNE. To the best of our knowledge, this
        is the first robust RHG with such guarantees.
\end{enumerate}

\subsubsection*{Notation}
We write $\col(x_i)_{i=1}^N$ to denote the concatenation of
vectors $x_i$, and $\mathbb{S}^n_{\succ 0}$ to denote the cone of symmetric
positive definite matrices. For a symmetric matrix $Q$, we define $\|x\|_Q^2 := x^\top Q x$. The symbols $\oplus$ and $\ominus$ denote the Minkowski sum and Pontryagin difference of sets, and $\inte(\cX)$ denotes the interior of~$\cX$. In MPC formulation, $y_i^{k|t}$ denotes the quantity $y$ associated with agent~$i$, $k$~steps ahead of the current time~$t$. We drop indices for better readability when the meaning is clear from the context.

\section{Problem Formulation}\label{sec:problem}
Let $\cA := \{1, \ldots, N\}$ denote a collection of $N$ agents. Agent~$i$ controls a discrete-time linear system whose state evolves as
\begin{equation}\label{eq:dyn}
  x_i^{t+1} = A_i\,x_i^t + B_i\,u_i^t + w_i^t,
  \qquad w_i^t \in \cW_i,
\end{equation}
where $x_i^t \in \R^{n_x}$, $u_i^t \in \R^{n_u}$, and
$w_i^t \in \cW_i \subset \R^{n_x}$ denote the state, control input and additive disturbance affecting the system at time $t$, with $\cW_i$ being a bounded set. 

The agents are allowed to exchange information over an undirected communication graph. Formally, let $\cG := (\cA, \mathcal{E})$ be a graph where each agent is associated with a node, and let $\cN_i \subseteq \cA$ denote the set of neighbors of agent~$i$ including $i$.

Let $x_i^{0:H|t} := (x_i^{0|t},\dots,x_i^{H|t})$ and
$u_i^{0:H-1|t} := (u_i^{0|t},\dots,u_i^{H-1|t})$ denote agent $i$'s predicted
state and input sequences over the horizon. At each time $t$, agent $i$ aims to
solve the following finite-horizon optimal control problem:
\begin{subequations}\label{eq:uncertain-game}
\begin{align}
  \min_{x_i^{0:H|t},\, u_i^{0:H-1|t}} \;\; & \sum_{k=0}^{H-1}
    \|x_i^{k|t} - s_i^*\|_{Q_i}^2 + \|u_i^{k|t} - v_i^*\|_{R_i}^2
    \label{eq:ug-cost}\\
  \text{s.t.}\;\; & x_i^{k+1|t} = A_i x_i^{k|t} + B_i u_i^{k|t} + w_i^{k|t},
    \label{eq:ug-dyn}\\
  & x_i^{k|t} \in \cX_i,\;\; u_i^{k|t} \in \cU_i, \label{eq:ug-priv}\\
  & \textstyle\sum_{j \in \cN_i} F_j x_j^{k|t} + G_j u_j^{k|t} \leq b_i,
    \label{eq:ug-shared}
\end{align}
\end{subequations}
where $x_i^{0|t} = x_i^t$, the dynamics~\eqref{eq:ug-dyn}, the private
constraints~\eqref{eq:ug-priv}, and the shared constraints~\eqref{eq:ug-shared}
hold for $k = 0,\dots,H-1$. The constraints~\eqref{eq:ug-priv} are local to
agent $i$, and the constraints in~\eqref{eq:ug-shared} are shared constraints
defined over its local neighborhood, with $F_j \in \R^{n_c \times n_x}$,
$G_j \in \R^{n_c \times n_u}$, and $b_i \in \R^{n_c}$.
Since the shared constraint~\eqref{eq:ug-shared} couples the feasibility set of agent $i$ with the feasibility set of its neighbors', each agent solving \eqref{eq:uncertain-game} forms a receding-horizon generalized Nash equilibrium problem (GNEP) under uncertainty.

Due to the presence of disturbance terms, the states are uncertain, and hence, we require the constraints to hold for every disturbance realization $w_i^{k|t} \in \cW_i$. In \eqref{eq:ug-cost}, we consider a tracking cost function where the tuple $(s_i^*, v_i^*)$ is a variational generalized Nash equilibrium (vGNE) of a steady-state game played between the agents, discussed subsequently.

\begin{problem} \label{problem_statement}
Our goal is to enable the agents to compute an open-loop vGNE of the RHG in a distributed manner such that 
\begin{itemize}
    \item the private and shared constraints are satisfied for all agents in a robust manner, i.e., for all $w_i^{k|t} \in \cW_i$,
    \item the closed-loop system converges to a neighborhood of $(s_i^*, v_i^*)$ for all agents, and
    \item agents do not share information about their dynamics and constraints with others, i.e.,  $(A_i, B_i, \cX_i, \cU_i, \cW_i, F_i, G_i, Q_i, R_i)$ are private to agent~$i$, whereas $b_i$ is common knowledge within the neighborhood $\cN_i$. 
\end{itemize}    
\end{problem}

This problem is more challenging as compared to distributed cooperative robust MPC problems because the coupled constraints must be enforced even though each agent's data $(A_i, B_i, \cX_i, \cU_i, \cW_i, F_i, G_i, Q_i, R_i)$ are private. Our analysis is carried out under the following assumptions.

\begin{assumption}\label{ass:main}
The graph $\cG$ is connected, and the following conditions hold for each $i \in \cA$:
\begin{enumerate}[label=(\roman*)]
  \item $\cW_i \subset \R^{n_x}$ is compact and convex with
    $0 \in \inte(\cW_i)$,
  \item  $\cX_i$ and $\cU_i$ are nonempty, compact, and convex, and
  \item the pair $(A_i, B_i)$ is stabilizable, and the state and input weights satisfy
      $Q_i \succ 0$ and $R_i \succ 0$. 
\end{enumerate}
\end{assumption}

We start by introducing the steady-state GNEP problem that leads to the cost function stated in \eqref{eq:ug-cost}. 

\section{Steady-state vGNE}\label{sec:vGNE}

We assume that the agents aim to arrive at the generalized Nash equilibrium of a steady-state game where an agent $i \in \cA$ solves the following constrained optimization problem:
\begin{subequations}\label{eq:ss}
\begin{align}
\min_{s_i,\, v_i} &\; \tilde{l}_i^p(s_i, v_i)
\quad\text{s.t.}\;\;
s_i = A_i s_i + B_i v_i, \label{eq:ss_dyn}\\
& s_i \in \cS_i,\;\; v_i \in \cV_i,\;\;
\sum_{j \in \cN_i} F_j s_j + G_j v_j \leq \bar{b}_i,
\label{eq:ss_constr}
\end{align}
\end{subequations}
where $s_i \in \R^{n_x}, v_i \in \R^{n_u}$ and the quantities $\cS_i, \cV_i$ and $\bar{b}_i$ are defined in the following section by adopting a tube-based constraint tightening approach. 
The cost function $\tilde{l}_i^p$ is assumed to satisfy the following conditions. 

\begin{assumption} \label{assumption_str_convex}
For each $i \in \cA$, the function $\tilde{l}_i^p: \R^{n_x} \times \R^{n_u} \to \R$ is continuously differentiable and strongly convex.   
\end{assumption}

Since the shared constraint~\eqref{eq:ss_constr} couples each agent's feasible set to its neighbors' decisions, \eqref{eq:ss} is a generalized Nash equilibrium problem. Let $z_i := (s_i, v_i)$, $z := \col(z_i)_{i\in\cA}$, and $\mathcal{Z} := \{ z \mid s_i = A_i s_i + B_i v_i, s_i \in \cS_i,\;\; v_i \in \cV_i, \sum_{j \in \cN_i} F_j s_j + G_j v_j \leq \bar{b}_i, \, \text{for all} \, \, i \in \cA \}$ be the set of strategies of the players that satisfy all the constraints. We now formally define the variational generalized Nash equilibrium (vGNE) of this game, and establish its existence and uniqueness.

\begin{definition}\label{def:ss_vgne}
A vector $z^* \in \cZ$ is a \emph{variational generalized Nash equilibrium}
(vGNE) if it solves the variational inequality $\mathrm{VI}(\cZ, F)$, i.e., $F(z^*)^{\top} (z - z^*) \geq 0$ for all $z \in \cZ$, where $F(z) := \col\big(\nabla_{z_i}\tilde{l}_i^p\big)_{i\in\cA}$ is the pseudo-gradient.
\end{definition}

\begin{proposition}[Existence and Uniqueness]\label{prop:vgne}
Let $\cS_i$ and $\cV_i$ be nonempty, compact and convex for all $i \in \cA$, $\mathcal{Z}$ be nonempty; and let Assumption \ref{assumption_str_convex} hold. Then, there exists a unique vGNE $z^*$ of the game defined in \eqref{eq:ss}.
\end{proposition}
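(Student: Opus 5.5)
We obtain existence and uniqueness from standard variational inequality theory, applied to $\mathrm{VI}(\cZ,F)$ from Definition~\ref{def:ss_vgne}. Two properties are needed: $\cZ$ is nonempty, compact and convex, and the pseudo-gradient $F$ is continuous and strongly monotone on $\cZ$. With these, the classical result (e.g., Facchinei and Pang, Theorem~2.3.3) yields a unique solution of the VI, and by Definition~\ref{def:ss_vgne} this solution is the unique vGNE.

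\emph{Properties of $\cZ$.} The set $\cZ$ is the intersection of three families of sets.
\begin{enumerate}[label=(\alph*)]
\item The product $\prod_{i}(\cS_i\times\cV_i)$, which is compact and convex by hypothesis.
\item The linear subspaces $\{(s_i,v_i): (I-A_i)s_i - B_i v_i = 0\}$, which are closed and convex.
\item The polyhedral half-spaces $\{z: \sum_{j\in\cN_i}F_js_j+G_jv_j\le \bar b_i\}$, which are closed and convex.
\end{enumerate}
An intersection of closed convex sets with a compact set is compact and convex. Nonemptiness is assumed in the statement.

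\emph{Properties of $F$.} By Assumption~\ref{assumption_str_convex}, each $\tilde l_i^p$ is $C^1$, so $F$ is continuous. Let $\mu_i>0$ be the strong convexity modulus of $\tilde l_i^p$. Then for all $z_i,z_i'$,
\[
(\nabla \tilde l_i^p(z_i)-\nabla \tilde l_i^p(z_i'))^\top(z_i-z_i')\ge \mu_i\|z_i-z_i'\|^2 .
\]
The key structural point is that $\tilde l_i^p$ depends only on agent $i$'s own variable $z_i$, so $\nabla_{z_i}\tilde l_i^p=\nabla\tilde l_i^p(z_i)$. Summing the inequality over $i\in\cA$ gives
\[
(F(z)-F(z'))^\top(z-z')\ge \mu\|z-z'\|^2, \qquad \mu:=\min_i\mu_i>0,
\]
so $F$ is strongly monotone.

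\emph{Uniqueness.} We prove this directly. Suppose $z^*$ and $z^{**}$ both solve $\mathrm{VI}(\cZ,F)$. Testing the VI for $z^*$ with $z=z^{**}$, and the VI for $z^{**}$ with $z=z^*$, and adding the two inequalities gives
\[
-(F(z^*)-F(z^{**}))^\top(z^*-z^{**})\ge0 .
\]
Combined with strong monotonicity this forces $\mu\|z^*-z^{**}\|^2\le0$, hence $z^*=z^{**}$.

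\emph{Existence.} Since $\cZ$ is compact and convex and $F$ is continuous, the Hartman--Stampacchia theorem gives a solution. Equivalently, $z\mapsto \Pi_\cZ(z-F(z))$ is a continuous self-map of $\cZ$, so Brouwer's fixed point theorem applies, and its fixed points are exactly the VI solutions.

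The only step needing care is the strong monotonicity of $F$. It relies on each cost being separable in the agent's own decision, which holds here because all coupling enters only through the constraints. If $\tilde l_i^p$ depended on other agents' variables, an additional assumption such as diagonal dominance would be required.
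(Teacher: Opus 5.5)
Your proof is correct and follows essentially the same route as the paper: $\cZ$ is compact and convex as the intersection of $\prod_i(\cS_i\times\cV_i)$ with closed convex sets, $F$ is continuous and strongly monotone with modulus $\mu=\min_i\mu_i$ because each $\tilde l_i^p$ depends only on $z_i$, and Facchinei--Pang Thm.~2.3.3(b) then gives a unique solution of $\mathrm{VI}(\cZ,F)$. Your explicit uniqueness argument and your Hartman--Stampacchia/Brouwer existence argument (which needs only continuity of $F$ and compactness of $\cZ$) simply unpack that citation, making the proof self-contained without changing the approach.
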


\begin{proof}
The feasible set $\cZ$ is the intersection of the compact convex sets $\cS_i \times \cV_i$, the affine steady-state manifolds, and the half-spaces of the tightened shared constraint; hence $\cZ$ is compact and convex, and it is nonempty by hypothesis. Each $\tilde{l}_i^p$ is continuously differentiable, strongly convex in $z_i$, and depends only on $z_i$; hence the pseudo-gradient $F(z) = \col(\nabla_{z_i}\tilde{l}_i^p)_{i\in\cA}$ is continuous and strongly monotone on $\cZ$ with modulus $\mu = \min_{i\in\cA}\mu_i > 0$. Since strong monotonicity is $\xi$-monotonicity with $\xi = 2$, the variational
inequality $\mathrm{VI}(\cZ, F)$ stated in Definition~\ref{def:ss_vgne} admits a unique solution $z^*$ by~\cite[Thm.~2.3.3(b)]{VI_book}.
\end{proof}

In this work, we refer to the vGNE of \eqref{eq:ss} as the {\it steady-state} vGNE of the original problem \eqref{eq:uncertain-game} because at $z^*$, the dynamics of each agent is at a steady-state in the absence of disturbance. Each agent $i$ computes its component $(s_i^*, v_i^*)$ of the vGNE offline using a distributed algorithm, such as the distributed operator-splitting algorithm from \cite{yi2019}. In the following section, we present the tube-based constraint tightening scheme and compute the terminal ingredients for the uncertain RHG \eqref{eq:uncertain-game} that tracks the steady-state vGNE. 

\section{Tube-Based Reformulation of Robust RHG}\label{sec:tube}
We now present the tube-based distributed reformulation of \eqref{eq:uncertain-game} in this section.
To handle the additive disturbance robustly, we follow the rigid-tube
approach of~\cite{rakovic2023,kouvaritakis2015} and decompose the
predicted state and input into nominal and error components given by
$x_i^{k|t} = s_i^{k|t} + e_i^{k|t}$ and
$u_i^{k|t} = v_i^{k|t} + K_i^p\,e_i^{k|t}$, where $K_i^p$ is a
prestabilizing feedback gain. The nominal part captures the
deterministic prediction and the error part absorbs the uncertainty. Under this decomposition, the predicted state and input of agent~$i$
evolve as
\begin{align}
  s_i^{k+1|t} &= A_i\,s_i^{k|t} + B_i\,v_i^{k|t},
    \label{eq:nom}\\
  e_i^{k+1|t} &= \Phi_i^p\,e_i^{k|t} + w_i^{k|t},
    \label{eq:err}
\end{align}
where $K_i^p$ is chosen such that the closed-loop matrix $\Phi_i^p := A_i + B_i K_i^p$ is Schur stable; this is possible because $(A_i, B_i)$ is assumed to be stabilizable, and ensures that the error dynamics remains bounded for every admissible disturbance realization. The nominal recursion~\eqref{eq:nom} is deterministic and serves as the decision variable in the receding horizon GNEP. 

Under this choice and the compactness
of $\cW_i$, the error dynamics~\eqref{eq:err} have a unique \emph{minimal
robust positively invariant} (mRPI) set given by
\begin{equation}\label{eq:mrpi}
  \cZ_i^\infty := \bigoplus_{n=0}^{\infty} (\Phi_i^p)^n \cW_i,
\end{equation}
which is compact and satisfies $\Phi_i^p e + w \in \cZ_i^\infty$ for every
$e \in \cZ_i^\infty$ and $w \in \cW_i$~\cite{rakovic2005}. 
Each agent computes $\cZ_i^\infty$, or a robust outer approximation of it, locally
via the algorithm of~\cite{rakovic2005}. Consequently, whenever $e_i^{0|t} \in \cZ_i^\infty$, the error recursion~\eqref{eq:err} remains inside $\cZ_i^\infty$ for every disturbance realization, and the nominal
recursion~\eqref{eq:nom} evolves deterministically.

We exploit this invariance to tighten the private and shared constraints as follows.
\begin{enumerate}
  \item \emph{Private constraint tightening.}
    Since the error stays inside $\cZ_i^\infty$ whenever
    $e_i^{0|t} \in \cZ_i^\infty$, we define
    \begin{equation}\label{eq:tight_priv}
      \cS_i := \cX_i \ominus \cZ_i^\infty, \qquad
      \cV_i := \cU_i \ominus K_i^p \cZ_i^\infty.
    \end{equation}
    Any nominal trajectory satisfying these tightened sets therefore
    guarantees that the actual state and input respect the original
    constraints~\eqref{eq:ug-priv}, regardless of the disturbance.

\item \emph{Shared constraint tightening.}
The shared constraint requires more care, because every neighboring agent's
error contributes to the worst-case left-hand side of~\eqref{eq:ug-shared}.
As the constraint has $n_c$ rows, we tighten it row by row. Let $c_{j,\ell}^\top$
denote the $\ell$-th row of $F_j + G_j K_j^p$; the worst-case contribution of
agent~$j$ to row $\ell$ is the scalar
\begin{equation}\label{eq:dj}
  [d_j]_\ell := \max_{e_j \in \cZ_j^\infty}\; c_{j,\ell}^\top e_j,
  \qquad \ell = 1, \dots, n_c,
\end{equation}
that is, the support function of $\cZ_j^\infty$ in the direction $c_{j,\ell}$.
Stacking these scalars gives the vector $d_j \in \R^{n_c}$, which agent~$j$
computes from its own data alone. Subtracting the aggregate worst case yields
the tightened shared constraint
\begin{equation}\label{eq:tight_shared}
  \sum_{j \in \cN_i} F_j s_j^{k|t} + G_j v_j^{k|t} \;\leq\; \bar{b}_i,
  \, \,
  \bar{b}_i := b_i - \sum_{j \in \cN_i} d_j,
\end{equation}
where $\bar{b}_i \in \R^{n_c}$ and the inequality holds component-wise.
\end{enumerate}

Note that these tightened quantities $\cS_i$, $\cV_i$, and $\bar{b}_i$ were used in the steady-state game \eqref{eq:ss_constr} of Section \ref{sec:vGNE}. 

The shared tightening~\eqref{eq:tight_shared} requires each agent to know the bound $\bar{b}_i$, which aggregates the neighbors' worst-case error contributions. The following remark shows that $\bar{b}_i$ can be computed without disclosing the private data of any agent, as required in Problem~\ref{problem_statement}.

\begin{remark}[Privacy-Preserving Protocol]\label{rem:priv}
Computing $\bar{b}_i$ requires the values of $d_j$ for every neighbor $j \in \cN_i$. Each agent~$j$ computes its own $d_j$ locally and broadcasts only this scalar (or vector) to its neighbors. The neighbors, therefore, do not need to be aware of $(F_j, G_j, K_j^p, \cZ_j^\infty)$, which preserves the privacy of each agent's local data sought in Problem~\ref{problem_statement}.
\end{remark}

The tube reformulation and constraint tightening developed above set up the deterministic nominal problem. To obtain closed-loop guarantees, such as recursive feasibility and convergence, we additionally require three terminal ingredients, that is, a terminal control law together with a terminal cost and a terminal invariant set. We construct these next. 

\subsection{Terminal Ingredients} \label{sec:terminal} 

We obtain the terminal control law and terminal cost from the
unconstrained infinite-horizon LQR with state and input weights $Q_i \succ 0$, $R_i \succ 0$ following \cite{kouvaritakis2015}. For each $i \in \mathcal{A}$,
let $P_i \succ 0$ be the unique stabilizing solution of the discrete
algebraic Riccati equation (DARE),
\begin{equation}
  P_i = Q_i + A_i^\top P_i A_i
        - A_i^\top P_i B_i \bigl(R_i + B_i^\top P_i B_i\bigr)^{-1}
          B_i^\top P_i A_i ,
  \label{eq:dare}
\end{equation}
and let
\begin{equation}
  K^f_i := -\bigl(R_i + B_i^\top P_i B_i\bigr)^{-1} B_i^\top P_i A_i
  \label{eq:terminal_gain}
\end{equation}
be the corresponding optimal feedback gain. Under
Assumption~\ref{ass:main}(iii), i.e. stabilizability of
$(A_i, B_i)$, together with $Q_i \succ 0$ and $R_i \succ 0$, a
stabilizing solution $P_i \succ 0$ of \eqref{eq:dare} exists, is unique,
and renders the closed-loop matrix $\Phi^f_i := A_i + B_i K^f_i$ Schur
stable. We define the terminal control law as
\begin{equation}
  \kappa_i(s_i) := v^*_i + K^f_i\,(s_i - s^*_i),
  \label{eq:kappa}
\end{equation}
and the terminal cost as
\begin{equation}
  l^f_i(s_i) := \lVert s_i - s^*_i \rVert^2_{P_i}.
  \label{eq:terminal_cost}
\end{equation}
At the optimal gain \eqref{eq:terminal_gain}, the DARE \eqref{eq:dare}
is equivalent to the closed-loop Lyapunov identity
\begin{equation}
  P_i = Q_i + (K^f_i)^\top R_i K^f_i
        + (\Phi^f_i)^\top P_i\, \Phi^f_i ,
  \label{eq:closed_loop_dare}
\end{equation}
which we subsequently leverage. 

Having fixed the terminal control law and cost, it remains to construct a
terminal set that is positively invariant under $\kappa_i$ while respecting the
shared constraint. The difficulty is that the shared constraint couples the
agents, so verifying invariance would ordinarily require coordination among
neighbors. To avoid this, we split the tightened shared bound $\bar b_i$
from~\eqref{eq:tight_shared} into per-agent shares. Since an agent may belong to
several neighborhoods, we let $\alpha_j^{(i)}$ denote the share that the
constraint of agent $i$ assigns to agent $j$, with
\begin{equation}\label{eq:alpha}
  F_j s_j + G_j v_j \le \alpha_j^{(i)}, \;\; j \in \cN_i,
  \quad \sum_{j \in \cN_i}\alpha_j^{(i)} = \bar b_i.
\end{equation}
Since agent $j$ maintains a single terminal set, it enforces the tightest
share demanded by any constraint it participates in. Let
$\cM_j := \{\, i \in \cA : j \in \cN_i \,\}$ denote the set of neighborhoods
containing agent $j$. The enforced share is then given by
\begin{equation}\label{eq:minshare}
  \alpha_j := \min_{i \in \cM_j}\, \alpha_j^{(i)},
\end{equation}
so that $F_j s_j + G_j v_j \le \alpha_j \le \alpha_j^{(i)}$ for every $i \in \cM_j$.
Each agent then enforces its own share $\alpha_j$ independently.

For this decoupling to be compatible with the steady-state operating point, an explicit choice of the per-neighborhood shares is given as follows.
\begin{equation}
  \alpha_j^{(i)} = F_j s_j^* + G_j v_j^* + \frac{1}{|\mathcal{N}_i|}
  \Big[\bar b_i - \sum_{k\in\mathcal{N}_i}\big(F_k s_k^* + G_k v_k^*\big)\Big],
  \, \, j\in\mathcal{N}_i, \label{eq:allocation}
\end{equation}
which satisfy $\sum_{j\in\cN_i}\alpha_j^{(i)} = \bar b_i$ by construction, so the
shares partition the tightened bound $\bar b_i$ among the neighbors of agent~$i$.
These shares are computed offline, alongside the steady-state vGNE. To evaluate
its own share $\alpha_j$, agent~$j$ needs, for each neighborhood $i\in\cM_j$ it
belongs to, only the scalar contributions $F_k s_k^* + G_k v_k^*$ of the members
$k\in\cN_i$, which are broadcast within that neighborhood. By leveraging this information, agent~$j$ computes $\alpha_j^{(i)}$ via \eqref{eq:allocation} and then $\alpha_j$
via \eqref{eq:minshare}, without any further coordination.

\begin{lemma}\label{lem:vgne-share}
For each $j\in\cA$, the steady-state vGNE satisfies $s_j^*\in\cS_j$,
$v_j^*\in\cV_j$, and $F_j s_j^* + G_j v_j^* \le \alpha_j$, with $\alpha_j$
given by \eqref{eq:minshare}.
\end{lemma}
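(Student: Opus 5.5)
The plan is to read off the first two claims from the definition of $\cZ$, and to reduce the third to a comparison of each share $\alpha_j^{(i)}$ with the vGNE contribution of agent $j$ via the explicit allocation \eqref{eq:allocation}.

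\emph{First two claims.} By Proposition~\ref{prop:vgne}, the vGNE $z^*$ exists and, by Definition~\ref{def:ss_vgne}, lies in $\cZ$. The definition of $\cZ$ imposes $s_i\in\cS_i$ and $v_i\in\cV_i$ for every $i\in\cA$. Hence $s_j^*\in\cS_j$ and $v_j^*\in\cV_j$ immediately.

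\emph{Share inequality.} For each neighborhood $i\in\cA$, membership $z^*\in\cZ$ also gives the tightened shared constraint
\[
\sum_{k\in\cN_i}\big(F_k s_k^*+G_k v_k^*\big)\le \bar b_i .
\]
So the slack vector
\[
\sigma_i:=\bar b_i-\sum_{k\in\cN_i}\big(F_k s_k^*+G_k v_k^*\big)
\]
is component-wise nonnegative. Fix $j$ and any $i\in\cM_j$; then $j\in\cN_i$, and \eqref{eq:allocation} reads
\[
\alpha_j^{(i)}=F_j s_j^*+G_j v_j^*+\tfrac{1}{|\cN_i|}\sigma_i .
\]
Since $|\cN_i|\ge 1$ (as $i\in\cN_i$) and $\sigma_i\ge 0$, this gives $F_j s_j^*+G_j v_j^*\le\alpha_j^{(i)}$ component-wise.

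Taking the minimum over $i\in\cM_j$ in \eqref{eq:minshare} preserves the inequality. This requires interpreting the minimum component-wise (row by row, consistent with the row-wise tightening), since the $\alpha_j^{(i)}$ are vectors in $\R^{n_c}$. A lower bound common to all members of a finite family remains a lower bound of their component-wise minimum. Also, $\cM_j$ is nonempty because $j\in\cN_j$, so $j\in\cM_j$. This yields $F_j s_j^*+G_j v_j^*\le\alpha_j$.

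\emph{Main obstacle.} There is no real difficulty here. The only points requiring care are (a) noting that the hypotheses of Proposition~\ref{prop:vgne} (nonemptiness of $\cZ$ and of the tightened sets) are in force, so that $z^*$ exists, and (b) being explicit that the minimum in \eqref{eq:minshare} and all inequalities are taken component-wise.
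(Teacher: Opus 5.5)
Your proposal is correct and follows essentially the same route as the paper's proof: feasibility of $z^*\in\cZ$ gives $s_j^*\in\cS_j$, $v_j^*\in\cV_j$ and a nonnegative slack in the tightened shared constraint, so each $\alpha_j^{(i)}$ dominates $F_j s_j^*+G_j v_j^*$ and hence so does the minimum in \eqref{eq:minshare}. Your explicit component-wise reading of that minimum, together with your checks that $\cM_j\neq\emptyset$ and $|\cN_i|\ge 1$, is slightly more careful than the paper's ``the one attaining the minimum'' phrasing, which is only literal when a single $i$ attains the minimum in every row.
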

\begin{proof}
By Proposition~\ref{prop:vgne}, the steady-state vGNE exists and satisfies all
constraints of \eqref{eq:ss}. In particular, $s_j^*\in\cS_j$ and $v_j^*\in\cV_j$
for every $j\in\cA$. Fix any $i\in\cA$. The shared constraint \eqref{eq:ss_constr}
gives $\bar b_i - \sum_{k\in\cN_i}\big(F_k s_k^* + G_k v_k^*\big) \ge 0$, so the
bracketed term in the allocation \eqref{eq:allocation} is nonnegative, yielding
$F_j s_j^* + G_j v_j^* \le \alpha_j^{(i)}$ for each $j\in\cN_i$. Since this holds
for every neighborhood $i\in\cM_j$ containing $j$, it holds in particular for the
one attaining the minimum in \eqref{eq:minshare}; therefore
$F_j s_j^* + G_j v_j^* \le \alpha_j$.
\end{proof}

Given the admissible shares $\{\alpha_j\}_{j\in\mathcal{N}_i}$, we now define the
terminal set for each agent as follows.
\begin{definition}[Terminal Set]\label{def:Sf}
The terminal set $\cS_i^f$ for agent~$i$ is the largest set
$\cS_i^f \subseteq \cS_i$ such that, for every $s_i \in \cS_i^f$,
\begin{equation}\label{eq:Sf}
  \kappa_i(s_i) \in \cV_i,\quad
  F_i s_i + G_i \kappa_i(s_i) \le \alpha_i,\quad
  A_i s_i + B_i \kappa_i(s_i) \in \cS_i^f.
\end{equation}
\end{definition}

The set $\cS_i^f$ is the maximal positively invariant set for the closed-loop
nominal dynamics under $\kappa_i$ subject to the local and decoupled
constraints. It is nonempty, since the steady-state vGNE satisfies all three
conditions in~\eqref{eq:Sf}, that is, $\kappa_i(s_i^*) = v_i^* \in \cV_i$,
$F_i s_i^* + G_i v_i^* \le \alpha_i$ by Lemma~\ref{lem:vgne-share}, and $A_i s_i^* + B_i v_i^* = s_i^*$ by the steady-state identity~\eqref{eq:ss_dyn};
hence $s_i^* \in \cS_i^f$.

\begin{remark}[Computation of $\cS_i^f$] \label{rem:Sif_comp} 
When $\cX_i$ and $\cU_i$ are polytopic and $\cZ_i^\infty$ is replaced by a
polytopic outer approximation~\cite{rakovic2005}, the tightened sets
$\cS_i, \cV_i$ are polytopes and $\cS_i^f$ is the maximal positively invariant
set of the affine recursion $s_i^{k+1} = \Phi_i^f s_i^k + B_i(v_i^* - K_i^f
s_i^*)$, computed locally via the Multi-Parametric Toolbox~\cite{mpt3}. For
ellipsoidal or other convex constraints, invariant sets can instead be computed
using LMI-based methods~\cite{boyd1994linear,blanchini1999set}.
\end{remark}

\subsection{Robust Receding Horizon GNEP}\label{sec:gnep}

With the tightened constraints and terminal ingredients in place, we can now state the receding-horizon game that each agent solves online. Since the goal of the agents is to converge to a neighborhood of the steady-state vGNE by repeatedly solving a receding horizon problem, we define a tracking cost 
\begin{equation} \label{eq:rh_tracking_cost}
l_i^p(s_i, v_i)
:= \|s_i - {s}^*_i\|_{Q_i}^2 + \|v_i - {v}^*_i\|_{R_i}^2,   
\end{equation}
where $Q_i \succ 0$ and $R_i \succ 0$.

Let $s_i^{0:H|t}:= (s_i^{0|t},\dots,s_i^{H|t})$ and
$v_i^{0:H-1|t} := (v_i^{0|t},\dots,v_i^{H-1|t})$ denote agent $i$'s predicted
nominal state and input sequences, and let $z_i^t := (s_i^{0:H|t}, v_i^{0:H-1|t})$.
At each time~$t$, agent $i$ solves the following finite-horizon optimization
problem in parallel with its peers.
\begin{subequations}\label{eq:gnep}
\begin{align}
  \min_{z_i^t} &\;\;
    J_i (z_i^t) := \sum_{k=0}^{H-1} l_i^p(s_i^{k|t}, v_i^{k|t})
    + l_i^f(s_i^{H|t})
    \label{eq:gnep_cost}\\
  \text{s.t.}\;\;
   & s_i^{k+1|t} = A_i s_i^{k|t} + B_i v_i^{k|t},
    \label{eq:gnep_dyn}\\
   & s_i^{k|t} \in \cS_i,\; v_i^{k|t} \in \cV_i,
    \label{eq:gnep_priv}\\
   & \textstyle\sum_{j \in \cN_i}  F_j s_j^{k|t} + G_j v_j^{k|t} \leq \bar{b}_i,
    \label{eq:gnep_shared} \\
   & s_i^{H|t} \in \cS_i^f,
    \label{eq:gnep_term}\\
   & x_i^t -  s_i^{0|t} \in \cZ_i^\infty,
    \label{eq:gnep_tube}
\end{align}
\end{subequations}
where the dynamics~\eqref{eq:gnep_dyn}, the tightened private
sets~\eqref{eq:gnep_priv}, and the tightened shared bound~\eqref{eq:gnep_shared}
hold for $k = 0,\dots,H-1$ and the stage cost
$l_i^p$ and terminal cost $l_i^f$ are given by~\eqref{eq:rh_tracking_cost}
and~\eqref{eq:terminal_cost}, respectively.

The following remark presents an important feature of the stage cost \eqref{eq:rh_tracking_cost} involved in the cost function \eqref{eq:gnep_cost}.

\begin{remark}\label{rem:tracking_cost}
The stage cost $l_i^p$ in~\eqref{eq:rh_tracking_cost} is the standard
set-point regulation objective of MPC; it penalizes deviation from the
desired operating point $(s_i^*, v_i^*)$, with the weights $Q_i$ and
$R_i$ acting as free design parameters. This particular form is essential
for our stability analysis presented in Section~\ref{sec:stability}.
\end{remark}

Following the rigid-tube approach~\cite{rakovic2023}, we treat the initial nominal state $s_i^{0|t}$ as a decision variable rather than fixing it to the measured state. The tube constraint~\eqref{eq:gnep_tube} only requires $x_i^t$ to lie within $\cZ_i^\infty$ of the chosen $s_i^{0|t}$, which gives the optimizer additional freedom and enlarges the region of attraction. We denote the joint open-loop vGNE of~\eqref{eq:gnep} by $(s_i^{*,0:H|t}, v_i^{*,0:H-1|t})_{i \in \cA}$, which the agents compute in a distributed fashion via the operator-splitting methods of~\cite{yi2019,belgioioso2022}. Having solved the game, each agent applies the input,
\begin{equation}\label{eq:control}
  u_i^t = v_i^{*,0|t} + K_i^p(x_i^t - s_i^{*,0|t}),
\end{equation}
which combines the optimal nominal input $v_i^{*,0|t}$ with the pre-stabilizing correction $K_i^p(x_i^t - s_i^{*,0|t})$ that drives the actual state back towards the nominal one.
The complete scheme separates naturally into an offline design phase, carried out once, and an online phase repeated at each time step. The summary of the complete scheme is presented in Algorithm~\ref{alg:main}.

\begin{algorithm}[t]
\caption{Robust Receding Horizon Game}\label{alg:main}
\begin{algorithmic}[1]
\STATE \textbf{Offline (each agent $i$):}
\STATE Choose $Q_i \succ 0$, $R_i \succ 0$ and select the prestabilizing gain $K_i^p$.
\STATE Compute $\cZ_i^\infty$ via~\cite{rakovic2005} and tighten $\cS_i$,
  $\cV_i$, and $\bar{b}_i$
  via~\eqref{eq:tight_priv}--\eqref{eq:tight_shared}.
\STATE Compute the steady-state vGNE $(s_i^*, v_i^*)$ of~\eqref{eq:ss}
  using~\cite{yi2019}.
\STATE Compute $P_i$ and $K_i^f$ from \eqref{eq:dare} and \eqref{eq:terminal_gain}, respectively, and then compute the terminal set $\cS_i^f$ following Remark~\ref{rem:Sif_comp}. 
\STATE \textbf{Online (at each time $t$):}
\STATE Measure $x_i^t$.
\STATE Solve the GNEP~\eqref{eq:gnep}
  in a distributed manner via~\cite{yi2019,belgioioso2022} to obtain
  $(s_i^{*,0:H|t}, v_i^{*,0:H-1|t})$.
\STATE Apply $u_i^t = v_i^{*,0|t} + K_i^p(x_i^t - s_i^{*,0|t})$.
\STATE Set $t \leftarrow t + 1$ and return to Step~7.
\end{algorithmic}
\end{algorithm}

The GNEP~\eqref{eq:gnep} has a strongly monotone pseudo-gradient over a convex feasible set on a connected graph (Assumptions~\ref{ass:main} and \ref{assumption_str_convex}), which are the conditions under which the operator-splitting methods of~\cite{yi2019,belgioioso2022} are guaranteed to converge to the vGNE.

\section{Recursive Feasibility and Closed-Loop Stability Analysis}\label{sec:rf}

\subsection{Recursive Feasibility}

We next show that the optimization problem~\eqref{eq:gnep} remains feasible at every future time step for every disturbance realization if it is feasible at $t=0$. Let $\cD(x^t)$ denote the joint feasibility set of~\eqref{eq:gnep} at state $x^t$, that is, the set of all nominal sequences $(s_i^{0:H|t}, v_i^{0:H-1|t})_{i \in \cA}$ satisfying constraints~\eqref{eq:gnep_dyn} to~\eqref{eq:gnep_tube} for all $i \in \cA$. We prove that if $\cD(x^t)$ is nonempty, then so is $\cD(x^{t+1})$ for any realization of the disturbance variables of the agents. To achieve this, we construct a feasible point of~\eqref{eq:gnep} at time $t+1$, called the shifted candidate solution, from the optimal solution at time $t$ by  dropping the first step of the optimal trajectory, shifting the rest forward by one step, and appending a final step generated by the terminal control law. We denote this candidate by $\hat{s}_i^{k|t+1}$ and $\hat{v}_i^{k|t+1}$, which is formally defined below. 

\begin{definition}[Shifted candidate solution]\label{def:cand}
Given the vGNE $(s_i^{*,0:H|t}, v_i^{*,0:H-1|t})_{i \in \cA}$
of~\eqref{eq:gnep} at time~$t$, we define for each agent $i$
\begin{align}
  \hat{s}_i^{k|t+1} &:=
  \begin{cases}
    s_i^{*,k+1|t}, & k = 0, \ldots, H\!-\!1, \\
    A_i s_i^{*,H|t} + B_i \kappa_i(s_i^{*,H|t}), & k = H,
  \end{cases}\label{eq:cand_s}\\
  \hat{v}_i^{k|t+1} &:=
  \begin{cases}
    v_i^{*,k+1|t}, & k = 0, \ldots, H\!-\!2, \\
    \kappa_i(s_i^{*,H|t}), & k = H\!-\!1.
  \end{cases}\label{eq:cand_v}
\end{align}
\end{definition}
We now show that this candidate is feasible at time $t+1$ for every disturbance realization, which establishes recursive feasibility.
\begin{theorem}[Recursive Feasibility]\label{thm:rf}
Suppose $\cD(x^0) \neq \emptyset$. Then, under the terminal ingredients chosen in Section \ref{sec:terminal}, the GNEP \eqref{eq:gnep} remains feasible for every $t \geq 0$ and every disturbance realization $w_i^t \in \cW_i$,
$i \in \cA$.
\end{theorem}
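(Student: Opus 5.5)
The proof goes by induction on $t$. The inductive step shows that if $\cD(x^t)\neq\emptyset$ and the agents compute the vGNE $(s_i^{*,0:H|t}, v_i^{*,0:H-1|t})_{i\in\cA}\in\cD(x^t)$, then the shifted candidate of Definition~\ref{def:cand} lies in $\cD(x^{t+1})$ for every disturbance realization $w_i^t\in\cW_i$. We check the constraints \eqref{eq:gnep_dyn}--\eqref{eq:gnep_tube} one at a time for the candidate.

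\emph{Tube constraint.} Under the control law \eqref{eq:control}, the actual state evolves as
$$x_i^{t+1}=A_ix_i^t+B_iv_i^{*,0|t}+B_iK_i^p(x_i^t-s_i^{*,0|t})+w_i^t.$$
Subtracting $\hat s_i^{0|t+1}=s_i^{*,1|t}=A_is_i^{*,0|t}+B_iv_i^{*,0|t}$ gives
$$x_i^{t+1}-\hat s_i^{0|t+1}=\Phi_i^p(x_i^t-s_i^{*,0|t})+w_i^t.$$
Since $x_i^t-s_i^{*,0|t}\in\cZ_i^\infty$ by \eqref{eq:gnep_tube} at time $t$, robust positive invariance of the mRPI set \eqref{eq:mrpi} places this error in $\cZ_i^\infty$ for every $w_i^t\in\cW_i$. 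Note that this is the only place the disturbance enters; all remaining constraints involve nominal quantities alone.

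\emph{Dynamics, private and shared constraints for $k=0,\dots,H-2$.} For these indices the candidate is just the time-$t$ vGNE shifted by one step. Its stage-wise constraints \eqref{eq:gnep_dyn}--\eqref{eq:gnep_shared} are therefore inherited from indices $k+1=1,\dots,H-1$ at time $t$. This holds jointly for all agents: every neighbor $j\in\cN_i$ shifts in the same way, so the left-hand side of \eqref{eq:gnep_shared} at $k$ is exactly the time-$t$ left-hand side at $k+1$.

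\emph{The index $k=H-1$ and the terminal constraint.} This is where the terminal ingredients are used, and I expect it to be the main obstacle, specifically the coupled shared constraint. Privately, $\hat s_i^{H-1|t+1}=s_i^{*,H|t}\in\cS_i^f\subseteq\cS_i$. By \eqref{eq:Sf}, $\hat v_i^{H-1|t+1}=\kappa_i(s_i^{*,H|t})\in\cV_i$, and the dynamics \eqref{eq:gnep_dyn} hold by the definition \eqref{eq:cand_s}. Invariance in \eqref{eq:Sf} then gives $\hat s_i^{H|t+1}\in\cS_i^f$, which is \eqref{eq:gnep_term}.

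For the shared constraint at $k=H-1$, every agent $j\in\cN_i$ is simultaneously applying its own terminal law. The decoupling in \eqref{eq:Sf} gives $F_js_j^{*,H|t}+G_j\kappa_j(s_j^{*,H|t})\le\alpha_j$. By \eqref{eq:minshare}, $\alpha_j\le\alpha_j^{(i)}$ for every $i\in\cM_j$, and $i\in\cM_j$ whenever $j\in\cN_i$. Summing over $j\in\cN_i$ and using $\sum_{j\in\cN_i}\alpha_j^{(i)}=\bar b_i$ from \eqref{eq:alpha} yields \eqref{eq:gnep_shared}. The point to state carefully is that this inequality needs no coordination beyond the offline shares, and that it relies on every agent, not just agent $i$, using its terminal law in the last candidate step.

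Combining the three parts, the candidate lies in $\cD(x^{t+1})$, so $\cD(x^{t+1})\neq\emptyset$. Because $\cD(x^{t+1})$ is then nonempty, compact and convex and the pseudo-gradient is strongly monotone, the argument of Proposition~\ref{prop:vgne} applies verbatim and a vGNE exists at time $t+1$. This lets the induction continue from $\cD(x^0)\neq\emptyset$ to all $t\ge0$.
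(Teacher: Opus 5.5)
Your proposal is correct and follows essentially the same route as the paper. You check the shifted candidate of Definition~\ref{def:cand} constraint by constraint: the tube constraint via robust invariance of $\cZ_i^\infty$, the indices $k\le H-2$ inherited from time $t$, and the coupled constraint at $k=H-1$ by summing the shares $\alpha_j\le\alpha_j^{(i)}$. You are also slightly more careful than the paper in two places: you obtain $\hat s_i^{H-1|t+1}\in\cS_i$ and $\hat v_i^{H-1|t+1}\in\cV_i$ from $\cS_i^f\subseteq\cS_i$ and \eqref{eq:Sf}, rather than claiming they were already feasible at time $t$, and you state explicitly that a vGNE exists at $t+1$ so the induction can continue.
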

\begin{proof}
We show that $\cD(x^t) \neq \emptyset$ implies $\cD(x^{t+1}) \neq \emptyset$ by verifying that the shifted candidate~\eqref{eq:cand_v}--\eqref{eq:cand_s} is
jointly feasible for~\eqref{eq:gnep} at time $t+1$. We now verify each constraint.

\smallskip\noindent
\emph{Step~1 (Tube constraint~\eqref{eq:gnep_tube}).}
Let $e_i^{0|t} := x_i^t - s_i^{*,0|t}$, which satisfies $e_i^{0|t} \in
\cZ_i^\infty$ due to feasibility at time $t$. Since $\hat{s}_i^{0|t+1} =
s_i^{*,1|t}$ and $u_i^t$ is given by~\eqref{eq:control},
\begin{align}
  x_i^{t+1} - \hat{s}_i^{0|t+1}
  &= A_i x_i^t + B_i u_i^t + w_i^t
    - A_i s_i^{*,0|t} - B_i v_i^{*,0|t} \notag\\
  &= \Phi_i^p\,e_i^{0|t} + w_i^t \in \cZ_i^\infty,
    \label{eq:tube_pf}
\end{align}
where the inclusion holds because $\cZ_i^\infty$ is RPI and given by \eqref{eq:mrpi}. Thus, the tube constraint is satisfied.

\smallskip\noindent
\emph{Step~2 (Nominal dynamics~\eqref{eq:gnep_dyn}).}
The shifted state and input~\eqref{eq:cand_v}--\eqref{eq:cand_s} satisfy
the nominal dynamics by construction: for $k \leq H\!-\!1$ they inherit
the dynamics of the optimal trajectory at time $t$, and the appended step
$\hat{s}_i^{H|t+1} = A_i s_i^{*,H|t} + B_i \kappa_i(s_i^{*,H|t})$ matches
the dynamics under $\kappa_i$.

\smallskip\noindent
\emph{Step~3 (Private~\eqref{eq:gnep_priv} and terminal~\eqref{eq:gnep_term} constraints).}
For $k = 0, \ldots, H\!-\!1$, the shifted values $\hat{s}_i^{k|t+1} =
s_i^{*,k+1|t} \in \cS_i$ and $\hat{v}_i^{k|t+1} \in \cV_i$ were already
feasible at time $t$. For the appended step $k = H$, the positive
invariance of $\cS_i^f$ (Definition~\ref{def:Sf}) gives
$\hat{s}_i^{H|t+1} = A_i s_i^{*,H|t} + B_i \kappa_i(s_i^{*,H|t}) \in
\cS_i^f \subseteq \cS_i$, which satisfies both the private constraint at
$k = H$ and the terminal constraint.

\smallskip\noindent
\emph{Step~4 (Shared constraint~\eqref{eq:gnep_shared}).}
This is the only constraint that couples the agents, so it must be checked jointly. For $k = 0, \dots, H{-}2$, the shifted sequences of all agents satisfy
\begin{equation*}
  \sum_{j\in\cN_i} F_j \hat s_j^{k|t+1} + G_j \hat v_j^{k|t+1}
  = \sum_{j\in\cN_i} F_j s_j^{*,k+1|t} + G_j v_j^{*,k+1|t} \le \bar b_i,
\end{equation*}
since the step $k+1$ was feasible at time $t$. For the appended step
$k = H{-}1$, each agent's terminal state satisfies $s_j^{*,H|t}\in\cS_j^f$, so
Definition~\ref{def:Sf} and the enforced share~\eqref{eq:minshare} give
\begin{equation*}
  F_j s_j^{*,H|t} + G_j \kappa_j(s_j^{*,H|t}) \le \alpha_j \le \alpha_j^{(i)},
  \quad j \in \cN_i,
\end{equation*}
where the second inequality holds since $i \in \cM_j$ and $\alpha_j$ is, by
definition~\eqref{eq:minshare}, the minimum of the shares $\alpha_j^{(i)}$ over
all neighborhoods in $\cM_j$; hence $\alpha_j \le \alpha_j^{(i)}$.
Summing over the neighborhood and using
$\sum_{j\in\cN_i}\alpha_j^{(i)} = \bar b_i$ yields
\begin{equation*}
  \sum_{j\in\cN_i} F_j s_j^{*,H|t} + G_j \kappa_j(s_j^{*,H|t})
  \le \sum_{j\in\cN_i}\alpha_j
  \le \sum_{j\in\cN_i}\alpha_j^{(i)} = \bar b_i,
\end{equation*}
so the shared constraint holds at $k = H{-}1$ as well.
\smallskip

Having verified~\eqref{eq:gnep_dyn}--\eqref{eq:gnep_tube}, we conclude that the shifted
candidate is feasible at $t+1$, and therefore $\cD(x^{t+1}) \neq
\emptyset$.
\end{proof}

\subsection{Closed-Loop Stability}\label{sec:stability}

We now turn to the central stability question: do the agents'
nominal trajectories converge to the steady-state vGNE under the
receding horizon policy? We exploit the potential game structure to find a
joint Lyapunov function from the individual cost functions.

\begin{lemma}[Potential Game]\label{lem:potential}
The GNEP~\eqref{eq:gnep} is a generalized potential game with potential
function $V(z^t) := \sum_{i=1}^N J_i(z_i^t)$, where $z^t = \mathrm{col}(z_i^t)_{i\in\mathcal{A}}$
and $J_i$ is given from \eqref{eq:gnep_cost}. Furthermore, the vGNE is the unique
minimizer of $V$ over $\mathcal{D}(x^t)$.
\end{lemma}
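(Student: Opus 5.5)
The proof rests on one structural fact: each agent's cost $J_i$ in \eqref{eq:gnep_cost} depends only on its own decision vector $z_i^t$. There are no cross terms between agents; the coupling enters only through the shared constraint \eqref{eq:gnep_shared}, and hence only through the joint feasible set $\cD(x^t)$.

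The first step is to verify the generalized potential game property in the usual sense. For every $i$, every pair $z_i^t, \tilde z_i^t$, and every $z_{-i}^t$ with both $(z_i^t,z_{-i}^t)$ and $(\tilde z_i^t,z_{-i}^t)$ in $\cD(x^t)$, we need
\[
V(z_i^t,z_{-i}^t)-V(\tilde z_i^t,z_{-i}^t)=J_i(z_i^t)-J_i(\tilde z_i^t).
\]
This identity is immediate, because the terms $J_j$ with $j\neq i$ cancel. Equivalently, $\nabla_{z_i}V = \nabla_{z_i}J_i$, so the pseudo-gradient of the game, $\col(\nabla_{z_i}J_i)_{i\in\cA}$, coincides with $\nabla V$. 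We will also record that $\cD(x^t)$ is the intersection of affine dynamics constraints, the convex sets $\cS_i$, $\cV_i$ and $\cS_i^f$, half-spaces, and the convex tube sets $x_i^t-\cZ_i^\infty$. It is therefore convex, and it is compact because $\cS_i$ and $\cV_i$ are compact. It is nonempty along closed-loop trajectories by Theorem~\ref{thm:rf}.

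The second step identifies the vGNE with the minimizers of $V$. By definition, the vGNE solves $\mathrm{VI}(\cD(x^t),\nabla V)$. Since $V$ is convex and differentiable, this variational inequality is exactly the first-order optimality condition for $\min_{z\in\cD(x^t)}V(z)$. The condition is necessary and sufficient because $\cD(x^t)$ is convex. Hence the set of vGNE equals $\arg\min_{\cD(x^t)}V$.

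The third step, uniqueness, is the delicate one. The term $\|s_i^{k|t}-s_i^*\|_{Q_i}^2+\|v_i^{k|t}-v_i^*\|_{R_i}^2$ with $Q_i,R_i\succ0$ is strongly convex in $(s_i^{0:H-1|t},v_i^{0:H-1|t})$. The terminal term $\|s_i^{H|t}-s_i^*\|_{P_i}^2$ with $P_i\succ0$ covers the remaining variable $s_i^{H|t}$. Thus $J_i$ is strongly convex in the whole of $z_i^t$, and even without that, $s_i^{H|t}$ is an affine function of the other variables through \eqref{eq:gnep_dyn}. So $V$ is strongly convex on $\cD(x^t)$, and a strongly convex continuous function on a nonempty compact convex set has exactly one minimizer. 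Alternatively, one could appeal to \cite[Thm.~2.3.3(b)]{VI_book}, as in Proposition~\ref{prop:vgne}, since $\nabla V$ is strongly monotone. The main point needing care is that the statement concerns the variational equilibrium rather than an arbitrary GNE, which may be non-unique. We will emphasize that only the vGNE, the one with shared multipliers computed by \cite{yi2019,belgioioso2022}, coincides with the minimizer of $V$.
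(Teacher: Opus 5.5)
Your proposal is correct and follows essentially the same route as the paper. Separability of the costs $J_i$ makes the pseudo-gradient equal to $\nabla V$, strong convexity of $V$ on the convex set $\cD(x^t)$ gives a unique minimizer, and the variational inequality is the first-order optimality condition for $\min_{\cD(x^t)} V$. Your version is slightly more careful than the paper's: you invoke $P_i \succ 0$ for strong convexity in $s_i^{H|t}$, and you use both directions of the VI--optimality equivalence rather than only that the minimizer solves the VI.
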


\begin{proof}
The cost function $J_i$ from \eqref{eq:gnep_cost} depends only on agent $i$'s own predicted sequences $\{s_i^{k|t}\}_{k=0}^{H}$ and $\{v_i^{k|t}\}_{k=0}^{H-1}$, since the agents are coupled solely through the shared constraint \eqref{eq:gnep_shared}. Consequently, $\nabla_{z_i} J_i = \nabla_{z_i} V$ for every $i$, so the pseudo-gradient $F(z) = \mathrm{col}\big(\nabla_{z_i} J_i(z_i)\big)_{i \in \cA}$ coincides
with the gradient of $V$, i.e.,\ $F(z) = \nabla_z V$. This is precisely the defining property of a potential game with potential $V$. Furthermore, each $J_i$ is strongly convex in $(s_i, v_i)$
because $Q_i, R_i \succ 0$, so $V$ is strongly convex on the convex feasible set $\cD(x^t)$ and admits a unique minimizer. Since $F(z) =\nabla_z V$, this minimizer satisfies the same variational inequality that defines the vGNE (Definition \ref{def:ss_vgne}), and hence the two coincide.
\end{proof}
The following corollary formalizes that since the vGNE minimizes $V$, it is no worse than any other feasible point.

\begin{corollary}\label{cor:opt}
For any feasible point $\bar{z}^t\in \cD(x^t)$, we
have $V(z^{*,t}) \leq V(\bar{z}^t)$, where $z^{*,t}=(s^{*,0:H|t}_i,v^{*,:H-1|t}_i)_{i\in \cA}$ denotes the vGNE at time $t$.
\end{corollary}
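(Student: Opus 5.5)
The corollary follows directly from Lemma~\ref{lem:potential}; the only work is to make sure the variational inequality characterizing the vGNE really does identify the global minimizer of $V$ over the whole joint feasible set $\cD(x^t)$. I would proceed in three steps.

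\textbf{Step 1: convexity of the feasible set.} I would first record that $\cD(x^t)$ is convex. It is cut out by the following pieces:
\begin{itemize}
\item the affine nominal dynamics \eqref{eq:gnep_dyn};
\item the convex sets $\cS_i$, $\cV_i$ and $\cS_i^f$ (Pontryagin differences of convex sets by compact sets are convex, and the maximal invariant set in Definition~\ref{def:Sf} is convex because the recursion is affine and the constraints are convex);
\item the half-spaces \eqref{eq:gnep_shared};
\item the tube constraint \eqref{eq:gnep_tube}, which says $s_i^{0|t}\in x_i^t\oplus(-\cZ_i^\infty)$, a convex set.
\end{itemize}

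\textbf{Step 2: the vGNE is a global minimizer of $V$.} The vGNE $z^{*,t}$ solves $\mathrm{VI}(\cD(x^t),F)$, that is,
\[
F(z^{*,t})^\top(\bar z^t-z^{*,t})\ge 0 \quad \text{for all } \bar z^t\in\cD(x^t).
\]
By Lemma~\ref{lem:potential}, $F=\nabla_z V$. Since $V$ is a sum of convex quadratics (because $Q_i,R_i,P_i\succ 0$), the first-order characterization of convexity gives
\[
V(\bar z^t)\ge V(z^{*,t})+\nabla V(z^{*,t})^\top(\bar z^t-z^{*,t})\ge V(z^{*,t}).
\]
This is exactly the claimed inequality, and it holds for every $\bar z^t\in\cD(x^t)$.

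\textbf{Step 3: the shifted candidate as an admissible comparison point.} I would also note the use case explicitly. By Theorem~\ref{thm:rf}, the shifted candidate of Definition~\ref{def:cand} lies in $\cD(x^{t+1})$. The corollary therefore applies at time $t+1$ with $\bar z^{t+1}$ equal to that candidate, which is the comparison needed for the Lyapunov decrease argument that follows.

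\textbf{Main obstacle.} The only delicate point is that the VI in Definition~\ref{def:ss_vgne} is phrased for the steady-state game. Here the vGNE of \eqref{eq:gnep} must be understood as the solution of the VI over the joint set $\cD(x^t)$, with pseudo-gradient $\col(\nabla_{z_i}J_i)_{i\in\cA}$. I would state this interpretation explicitly before applying the argument above.
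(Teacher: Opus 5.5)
Your proposal is correct and is essentially the paper's argument: the corollary follows directly from Lemma~\ref{lem:potential}, whose content is that the pseudo-gradient of \eqref{eq:gnep} equals $\nabla_z V$, so that the vGNE and the minimizer of $V$ over $\cD(x^t)$ coincide. The only difference is the direction of that identification. The paper argues that the unique minimizer of the strongly convex $V$ satisfies the VI, tacitly using uniqueness of the VI solution. Your Step~2 instead goes from the VI to global optimality via the gradient inequality for convex $V$. This is slightly more direct and does not actually need the convexity of $\cD(x^t)$ from Step~1, which matters only for existence of the vGNE. Step~3 is outside the corollary itself.
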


We use the potential function $V$ introduced in Lemma~\ref{lem:potential} as a candidate Lyapunov function. Indeed, $V$ is positive definite with respect to the steady-state vGNE, with $V(z) = 0$ if and only if $z = z^*$ and $V(z) \ge 0$ otherwise, since $Q_i, R_i, P_i \succ 0$. 

To establish convergence, we need to evaluate $V$ along the closed-loop sequence of RHG vGNE solutions $z^{*,t}$ produced by Algorithm~\ref{alg:main} and show that the potential function is strictly decreasing along this trajectory. Before stating the main stability result, we present a one-step decrease property of the terminal cost~\eqref{eq:terminal_cost} under the terminal control law~\eqref{eq:kappa}. This property is useful to establish the main result.


\begin{lemma}[Terminal Cost Decrease]
\label{lem:terminal-decrease}
For every $s_i \in \R^{n_x}$,
\begin{equation}
l^p_i(s_i, \kappa_i(s_i)) + l^f_i\big(s^+_i\big) - l^f_i(s_i) = 0,
\label{eq:terminal-decrease}
\end{equation}
where $s^+_i=A_i s_i + B_i \kappa_i(s_i)$.
\end{lemma}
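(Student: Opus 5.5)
The plan is to reduce everything to the error coordinate $\delta_i := s_i - s_i^*$ and then invoke the closed-loop Lyapunov form~\eqref{eq:closed_loop_dare} of the DARE. From the terminal law~\eqref{eq:kappa}, the input deviation is simply $\kappa_i(s_i) - v_i^* = K_i^f \delta_i$. Substituting into the tracking cost~\eqref{eq:rh_tracking_cost} therefore gives
\begin{equation*}
  l_i^p(s_i,\kappa_i(s_i)) = \delta_i^\top \bigl(Q_i + (K_i^f)^\top R_i K_i^f\bigr)\delta_i ,
\end{equation*}
and the terminal cost~\eqref{eq:terminal_cost} evaluated at $s_i$ is $l_i^f(s_i) = \delta_i^\top P_i \delta_i$.

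The next step is to express the successor deviation $s_i^+ - s_i^*$ in terms of $\delta_i$. Writing $s_i^+ = A_i s_i + B_i v_i^* + B_i K_i^f \delta_i$ and subtracting $s_i^*$, I would use the steady-state identity $s_i^* = A_i s_i^* + B_i v_i^*$. This identity holds because the vGNE lies in $\cZ$, so it satisfies~\eqref{eq:ss_dyn}; it is exactly the fact already used to show $s_i^* \in \cS_i^f$. This yields $s_i^+ - s_i^* = (A_i + B_i K_i^f)\delta_i = \Phi_i^f \delta_i$. Hence
\begin{equation*}
  l_i^f(s_i^+) = \delta_i^\top (\Phi_i^f)^\top P_i \Phi_i^f \delta_i .
\end{equation*}

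Summing the three terms, the left-hand side of~\eqref{eq:terminal-decrease} becomes the quadratic form
\begin{equation*}
  \delta_i^\top\bigl(Q_i + (K_i^f)^\top R_i K_i^f + (\Phi_i^f)^\top P_i \Phi_i^f - P_i\bigr)\delta_i .
\end{equation*}
This vanishes identically by~\eqref{eq:closed_loop_dare}, for every $s_i \in \R^{n_x}$; no constraint membership is needed.

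The only step needing care is justifying~\eqref{eq:closed_loop_dare} from~\eqref{eq:dare}, which the paper asserts. If I were to verify it, I would expand $(\Phi_i^f)^\top P_i \Phi_i^f$ and $(K_i^f)^\top R_i K_i^f$. I would then use $(R_i + B_i^\top P_i B_i) K_i^f = -B_i^\top P_i A_i$ to collapse the cross terms into $-A_i^\top P_i B_i (R_i + B_i^\top P_i B_i)^{-1} B_i^\top P_i A_i$, which recovers~\eqref{eq:dare}. The other potential pitfall is forgetting the affine offset: the steady-state identity is what removes the constant term, and without it the difference would not be a pure quadratic form in $\delta_i$.
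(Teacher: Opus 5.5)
Your proof is correct and follows the paper's argument step for step. You shift to $\delta = s_i - s_i^*$, use $\kappa_i(s_i)-v_i^* = K_i^f\delta$ and the steady-state identity to get $s_i^+ - s_i^* = \Phi_i^f\delta$, and conclude via the closed-loop form~\eqref{eq:closed_loop_dare} of the DARE. Your extra sketch deriving~\eqref{eq:closed_loop_dare} from~\eqref{eq:dare} via $(R_i + B_i^\top P_i B_i)K_i^f = -B_i^\top P_i A_i$ is also correct, and it fills in a step the paper only asserts.
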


\begin{proof}
Let $\delta := s_i - s^*_i$. Then, following \eqref{eq:kappa}, we have
\begin{equation} \label{eq:kf_delta}
\kappa_i(s_i) - v^*_i = K^f_i \delta.    
\end{equation}
Next, following the steady-state identity $s^*_i = A_i s^*_i + B_i v^*_i$ from \eqref{eq:ss_dyn}, we obtain 
\begin{equation} \label{eq:phif_delta}
s^+_i-s^*_i= \Phi^f_i \delta.    
\end{equation}
Now, leveraging \eqref{eq:kf_delta} in the expression for $l^p_i(s_i, \kappa_i(s_i))$, we obtain
\begin{align}
l^p_i(s_i, \kappa_i(s_i))&=\|s_i-s^*_i\|^2_{Q_i} + \|\kappa_i(s_i)-v^*_i\|^2_{R_i}, \nonumber \\
&= \delta^{\top}\big[Q_i + (K^f_i)^{\top} R_i K^f_i\big]\delta. \label{eq:lp_i}
\end{align}
Similarly, leveraging \eqref{eq:phif_delta} in the expression for $l^f_i(s^+_i)$, we obtain
\begin{align} \label{eq:lfi_si_plus}
l^f_i(s^+_i)=\|s^+_i-s^*_i\|^2_{P_i}= \delta^{\top}(\Phi^f_i)^{\top} P_i \Phi^f_i \delta    
\end{align}
Furthermore, we have
\begin{align} \label{eq:lfi_si}
l^f_i(s_i)=\|s_i-s^*_i\|^2_{P_i}= \delta^{\top} P_i \delta   
\end{align}
Following \eqref{eq:lp_i}, \eqref{eq:lfi_si_plus} and \eqref{eq:lfi_si}, we get
$l^p_i(s_i, \kappa_i(s_i)) + l^f_i\big(s^+_i\big) - l^f_i(s_i)=\delta^{\top} \big[ Q_i + (K^f_i)^{\top} R_i K^f_i + (\Phi^f_i)^{\top} P_i \Phi^f_i - P_i\big] \delta$.
The sum vanishes following~\eqref{eq:closed_loop_dare}.
\end{proof}

We now state and prove the asymptotic convergence of the nominal
trajectories to the steady-state vGNE. 

\begin{theorem}[Robust Asymptotic Stability]\label{thm:RAS}
Suppose $\cD(x^0) \neq \emptyset$. Then, under Algorithm~\ref{alg:main}, for every realization $w_i^t \in \cW_i$, $i \in \cA$, the following hold.
\begin{enumerate}[label=(\roman*)]
  \item \emph{(Lyapunov decrease.)} For all $t \geq 0$,
    \begin{equation}\label{eq:lyap_decrease}
      V(z^{*,t+1}) - V(z^{*,t})
      \leq -\sum_{i=1}^N l_i^p(s_i^{*,0|t}, v_i^{*,0|t}).
    \end{equation}
  \item \emph{(Nominal convergence.)} For every $i \in \cA$,
    $\lim_{t \to \infty} \|s_i^{*,0|t} - s_i^*\| = 0$ and
    $\lim_{t \to \infty} \|v_i^{*,0|t} - v_i^*\| = 0$.
  \item \emph{(Actual-state convergence.)} For every $i \in \cA$, the actual state $x_i^t$ converges to the set $\{s_i^*\} \oplus \cZ_i^\infty$:
    \begin{equation}\label{eq:actual}
      \dist\bigl(x_i^t,\,\{s_i^*\}\oplus\cZ_i^\infty\bigr)
      \leq \|s_i^{*,0|t} - s_i^*\| \to 0
      \quad \text{as } t \to \infty.
    \end{equation}
\end{enumerate}
\end{theorem}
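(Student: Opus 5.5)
The proof follows the standard MPC Lyapunov argument, with the potential $V$ of Lemma~\ref{lem:potential} playing the role of the value function.

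\emph{Step (i).} Fix $t$ and any disturbance realization. By Theorem~\ref{thm:rf}, the shifted candidate $\hat z^{t+1}$ of Definition~\ref{def:cand} lies in $\cD(x^{t+1})$, so Corollary~\ref{cor:opt} gives $V(z^{*,t+1}) \le V(\hat z^{t+1})$. We then compute $V(\hat z^{t+1}) - V(z^{*,t})$ agent by agent, using that $V=\sum_i J_i$ and that each $J_i$ depends only on agent $i$'s own variables. The shifted stage costs for $k=0,\dots,H-2$ reproduce the terms $k=1,\dots,H-1$ of $J_i(z_i^{*,t})$. Hence
\[
J_i(\hat z_i^{t+1}) - J_i(z_i^{*,t}) = -l_i^p(s_i^{*,0|t},v_i^{*,0|t}) + l_i^p\bigl(s_i^{*,H|t},\kappa_i(s_i^{*,H|t})\bigr) + l_i^f(\hat s_i^{H|t+1}) - l_i^f(s_i^{*,H|t}).
\]
The last three terms vanish exactly by Lemma~\ref{lem:terminal-decrease}. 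Summing over $i$ yields \eqref{eq:lyap_decrease}. Note that the disturbance never enters these costs, because the tube constraint only fixes $s_i^{0|t}$ up to $\cZ_i^\infty$ and the candidate is a purely nominal object.

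\emph{Step (ii).} Since $V\ge 0$, telescoping \eqref{eq:lyap_decrease} gives
\[
\sum_{t=0}^{T}\sum_i l_i^p(s_i^{*,0|t},v_i^{*,0|t}) \le V(z^{*,0}) < \infty
\]
uniformly in $T$. The summands are therefore summable and tend to zero. Because $Q_i,R_i\succ 0$, we have $l_i^p(s,v)\ge \lambda_{\min}(Q_i)\|s-s_i^*\|^2+\lambda_{\min}(R_i)\|v-v_i^*\|^2$, which gives both limits. Finiteness of $V(z^{*,0})$ follows from $\cD(x^0)\neq\emptyset$ together with compactness of the constraint sets.

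\emph{Step (iii).} Feasibility gives $x_i^t - s_i^{*,0|t}\in\cZ_i^\infty$, so $s_i^* + (x_i^t - s_i^{*,0|t}) \in \{s_i^*\}\oplus\cZ_i^\infty$. The distance is therefore at most $\|x_i^t - s_i^* - (x_i^t - s_i^{*,0|t})\| = \|s_i^{*,0|t}-s_i^*\|$, which tends to zero by (ii).

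The main obstacle is Step (i): the comparison $V(z^{*,t+1})\le V(\hat z^{t+1})$ requires that the online vGNE is the \emph{global} minimizer of $V$ over the joint set $\cD(x^{t+1})$, not merely a Nash point. This is exactly what Lemma~\ref{lem:potential} and Corollary~\ref{cor:opt} supply, and it is where the decoupled tracking costs of Remark~\ref{rem:tracking_cost} are essential. Beyond that, care is needed only with the index bookkeeping of the shift and with checking that the terminal-step identity is an equality, so no extra margin is required.
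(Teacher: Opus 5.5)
Your proposal is correct and follows essentially the same route as the paper: the shifted-candidate cost comparison with Lemma~\ref{lem:terminal-decrease} and Corollary~\ref{cor:opt} for (i), telescoping plus the $\lambda_{\min}$ bounds for (ii), and the tube-constraint distance bound for (iii). The only difference is cosmetic: you write the per-agent cost difference $J_i(\hat z_i^{t+1}) - J_i(z_i^{*,t})$ directly, whereas the paper first simplifies $J_i(\hat z_i^{t+1})$ and then expresses $J_i(z_i^{*,t})$ in terms of it.
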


\begin{proof}
\emph{(i) Lyapunov decrease.}
We first evaluate agent $i$'s cost at the shifted candidate $\hat{z}_i^{t+1}$
from Definition~\ref{def:cand}, which is given as follows.
\begin{align}
  J_i(\hat{z}_i^{t+1}) &= \sum_{k=0}^{H-1} l_i^p(\hat{s}_i^{k|t+1}, \hat{v}_i^{k|t+1})
   + l_i^f(\hat{s}_i^{H|t+1}), \label{eq:Jhat_1}\\
   & = \sum_{k=0}^{H-2} l_i^p(s_i^{*,k+1|t}, v_i^{*,k+1|t})
    + l_i^p(s_i^{*,H|t}, \kappa_i(s_i^{*,H|t})) \nonumber \\
    & \quad + l_i^f(\hat{s}_i^{H|t+1}),\nonumber \\
   & = \sum_{k=1}^{H-1} l_i^p(s_i^{*,k|t}, v_i^{*,k|t})
    + l_i^p(s_i^{*,H|t}, \kappa_i(s_i^{*,H|t})) \nonumber \\
    & \quad + l_i^f(\hat{s}_i^{H|t+1}), \label{eq:Jhat_2} \\
   & = \sum_{k=1}^{H-1} l_i^p(s_i^{*,k|t}, v_i^{*,k|t})
    +  l_i^f({s}_i^{*,H|t}),  \label{eq:Jhat_3}
\end{align}
where \eqref{eq:Jhat_2} is obtained from \eqref{eq:Jhat_1} by first leveraging
\eqref{eq:cand_s} and \eqref{eq:cand_v} and then reindexing with $k=k+1$, and
\eqref{eq:Jhat_3} results from applying Lemma~\ref{lem:terminal-decrease} to
\eqref{eq:Jhat_2} with $s_i=s^{*,H|t}_i$.
Now, $J_i(z_i^{*,t})$ can be expressed in terms of
$J_i(\hat{z}_i^{t+1})$ from \eqref{eq:Jhat_3} as
\begin{align} \label{eq:Jstar_Jhat}
J_i(z_i^{*,t})= l_i^p(s_i^{*,0|t}, v_i^{*,0|t}) + J_i(\hat{z}_i^{t+1}). 
\end{align}
Rearranging and summing \eqref{eq:Jstar_Jhat} over $i$, we obtain
\begin{equation}\label{eq:Phihat}
  V(\hat{z}^{t+1})
  = V(z^{*,t}) - \sum_{i=1}^N l_i^p(s_i^{*,0|t}, v_i^{*,0|t}),
\end{equation}
where $\hat{z}^{t+1} = \mathrm{col}(\hat{z}_i^{t+1})_{i \in \cA}$ and
$V(\hat{z}^{t+1}) = \sum_i J_i(\hat{z}_i^{t+1})$. Since $\hat{z}^{t+1}$ is
feasible at $t+1$ (Theorem~\ref{thm:rf}), Corollary~\ref{cor:opt} gives
$V(z^{*,t+1}) \leq V(\hat{z}^{t+1})$. Combining this with \eqref{eq:Phihat}
yields \eqref{eq:lyap_decrease}.

\smallskip\noindent

\emph{(ii) Nominal convergence.}
By construction $V(z^{*,t}) \geq 0$, and~\eqref{eq:lyap_decrease} shows that it is
non-increasing. Summing~\eqref{eq:lyap_decrease} from $t=0$ to $T-1$ telescopes
the left-hand side, giving
\begin{equation*}
\sum_{t=0}^{T-1} \sum_i l_i^p(s_i^{*,0|t}, v_i^{*,0|t})
\;\leq\; V(z^{*,0}) - V(z^{*,T}) \;\leq\; V(z^{*,0}),
\end{equation*}
where the last step uses $V(z^{*,T}) \geq 0$. Letting $T \to \infty$ and
substituting $l_i^p$ from~\eqref{eq:rh_tracking_cost} yields
\begin{equation*}
\sum_{t=0}^{\infty} \sum_i \Big( \|s_i^{*,0|t} - s_i^*\|_{Q_i}^2
  + \|v_i^{*,0|t} - v_i^*\|_{R_i}^2 \Big)
\leq V(z^{*,0}) < \infty,
\end{equation*}
where $V(z^{*,0}) < \infty$ follows from feasibility at $t=0$. Hence, the series converges, i.e.,
\begin{equation*}
\|s_i^{*,0|t} - s_i^*\|_{Q_i}^2 + \|v_i^{*,0|t} - v_i^*\|_{R_i}^2 \to 0
\quad \text{as } t \to \infty.
\end{equation*}
Since each term is nonnegative, both summands vanish separately, i.e.,
$\|s_i^{*,0|t} - s_i^*\|_{Q_i}^2 \to 0$ and $\|v_i^{*,0|t} - v_i^*\|_{R_i}^2 \to 0$.
Using $x^\top Q_i x \geq \lambda_{\min}(Q_i)\|x\|^2$ with $Q_i \succ 0$, we obtain
\begin{equation*}
\|s_i^{*,0|t} - s_i^*\|^2 \leq \tfrac{1}{\lambda_{\min}(Q_i)}
\|s_i^{*,0|t} - s_i^*\|_{Q_i}^2 \to 0,
\end{equation*}
where $\lambda_{\min}(Q_i)$ denotes the smallest eigenvalue of $Q_i$. Thus,
$s_i^{*,0|t} \to s_i^*$ as $t \to \infty$. The same argument with $R_i \succ 0$
gives $v_i^{*,0|t} \to v_i^*$.

\smallskip\noindent

\emph{(iii) Actual-state convergence.}
By the tube constraint~\eqref{eq:gnep_tube}, at every time $t$ the actual state
satisfies $x_i^t \in \{s_i^{*,0|t}\} \oplus \cZ_i^\infty$, so the error
$e_i^t := x_i^t - s_i^{*,0|t}$ lies in $\cZ_i^\infty$. Consequently, the point
$s_i^* + e_i^t$ belongs to the target set $\{s_i^*\} \oplus \cZ_i^\infty$, and
bounding the distance from $x_i^t$ to the target set by the distance to this
particular point gives
\begin{equation*}
  \dist\big(x_i^t,\, \{s_i^*\}\oplus\cZ_i^\infty\big)
  \;\leq\; \big\|x_i^t - (s_i^* + e_i^t)\big\|
  = \big\|s_i^{*,0|t} - s_i^*\big\|,
\end{equation*}
where the error terms cancel since $x_i^t = s_i^{*,0|t} + e_i^t$. By part~(ii),
the right-hand side tends to zero as $t \to \infty$, so $x_i^t$ converges to
$\{s_i^*\} \oplus \cZ_i^\infty$, establishing~\eqref{eq:actual}.
\end{proof}
\begin{remark}[Robust Constraint Satisfaction]
\label{rem:constraint-sat}
By the RPI property of $\cZ_i^\infty$, the tube constraint gives
$e_i^{0|t} := x_i^t - s_i^{*,0|t} \in \cZ_i^\infty$ for all $t$, so the
private constraints hold as
$x_i^t \in \cS_i \oplus \cZ_i^\infty \subseteq \cX_i$ and
$u_i^t \in \cV_i \oplus K_i^p \cZ_i^\infty \subseteq \cU_i$, while summing over
neighbors yields
$\sum_{j} F_j x_j^t + G_j u_j^t \le \bar b_i + \sum_{j} d_j = b_i$;
hence, the original constraints \eqref{eq:ug-priv} and \eqref{eq:ug-shared}
are satisfied for every realization $w_i^t \in \cW_i$.
\end{remark}

\section{Conclusion}\label{sec:conclusion}

We proposed a robust receding horizon game framework for multi-agent
systems with additive disturbances and shared constraints. The framework
combines tube-based constraint tightening, equipped with a DARE-based terminal cost and decoupled positively invariant terminal sets. These ingredients guarantee recursive feasibility for every bounded disturbance realization. Exploiting the potential-game structure of the GNEP, we further established asymptotic convergence of the nominal states to the steady-state vGNE and showed that each agent's actual state converges to a neighborhood of the vGNE determined by the minimal RPI set.

There remain several interesting directions for future research. Analyzing closed-loop stability under general economic costs and when the cost functions of the agents are coupled remains a challenging open problem. Similarly, incorporating chance constraints or risk measures in the uncertain constraints is important to reduce conservatism of the robust approach studied here. Finally, developing distributed algorithms for the above mentioned extensions is an important direction for future research. 
\bibliographystyle{IEEEtran}
\bibliography{rhg_dinesh}
\end{document}